\newtheorem{thm}{Theorem}[section]
\newtheorem{theorem}{Theorem}[section]
\newtheorem{lem}[thm]{Lemma}
\newtheorem{claim}[thm]{Claim}
\theoremstyle{definition}
\newtheorem{definition}[thm]{Definition}
\theoremstyle{remark}
\newtheorem{remark}[thm]{Remark}
\newtheorem{notation}[thm]{Notation}
\let\c@equation\c@thm
\numberwithin{equation}{section}
\title[Monochromatic Sumset Without large cardinals]{Monochromatic Sumset Without the use of large cardinals}
\author{Jing Zhang}
\newcommand{\Addresses}{{
  \bigskip
  \footnotesize

 \textsc{Department of Mathematics,\\ Bar-Ilan University, Ramat-Gan 5290002, Israel}\par\nopagebreak
  \textit{E-mail}: \texttt{jingzhan@alumni.cmu.edu}
}}
\begin{document}

\begin{abstract}
We show in this note that in the forcing extension by $\mathrm{Add}(\omega,\beth_{\omega})$, the following Ramsey property holds: for any $r\in \omega$ and any $f: \mathbb{R}\to r$, there exists an infinite $X\subset \mathbb{R}$ such that $X+X$ is monochromatic under $f$. We also show the Ramsey statement above is true in $\mathrm{ZFC}$ when $r=2$. This answers two questions from \cite{6authors}.
\end{abstract}

\maketitle
\let\thefootnote\relax\footnotetext{2010 \emph{Mathematics Subject Classification}. Primary: 03E02, 03E35, 03E55. Key words and phrases: forcing, partition relations, additive Ramsey theory, infinite combinatorics.

The work was done when I was a graduate student at Carnegie Mellon University supported in part by the US tax payers. Part of the revision was done when I am a post doctoral fellow in Bar-Ilan University, supported by the Foreign Postdoctoral Fellowship Program of the Israel Academy of Sciences and Humanities and by the Israel Science Foundation (grant agreement 2066/18).
}

\section{Introduction}

\begin{definition}
Let $(A, +)$ be an additive structure and $\kappa,r$ be cardinals. Let $A\to^+ (\kappa)_{r}$ abbreviate the statement: for any $f: A\to r$, there exists $X\subset A$ with $|X|=\kappa$ such that $X+X=_{def} \{a+b: a,b\in X\}$ is monochromatic under $f$.
\end{definition}

There have been recent developments on additive partition relations for real numbers. Hindman, Leader and Strauss \cite{MR3696151} showed that if $2^{\omega}<\aleph_{\omega}$ then there exists some $r\in \omega$ such that $\mathbb{R}\not\to^+ (\aleph_0)_r$. On the other hand, Komjáth, Leader, Russell, Shelah, Soukup and Vidnyánszky \cite{6authors} showed that relative to the existence of an $\omega_1$-Erd\H{o}s cardinal, it is consistent that for any $r\in \omega$, $\mathbb{R}\to^+(\aleph_0)_{r}$.
These results are optimal in a sense as there exist the following restrictions: 
\begin{enumerate}
\item Komjáth \cite{MR3511943} and, independently, Soukup and Weiss \cite{SoukupWeiss} showed that $\mathbb{R}\not\to^+(\aleph_1)_2$;
\item Soukup and Vidnyánszky showed there exists a finite coloring $f$ on $\mathbb{R}$ such that no infinite $X\subset \mathbb{R}$ satisfies that $\underbrace{X+\cdots + X}_{k}$ is monochromatic for $k\geq 3$.
\end{enumerate}

It should be emphasized that the difficulty comes from the fact that repetitions are allowed. If we only want some infinite $X\subset \mathbb{R}$ such that $X\oplus X =\{a+b: a\neq b\in X\}$ is monochromatic, then the classical theorem of Ramsey implies this already. In fact, Hindman's finite-sum theorem is a much stronger Ramsey-type statement: for any finite coloring of $\mathbb{N}$, there exists some infinite $X\subset \mathbb{N}$ such that $FS(X)=_{def}\{\Sigma_{0\leq i <k} a_i: \{a_0,a_1,\cdots, a_{k-1}\} \in [X]^{<\omega}\}$ is monochromatic. However, if repeated sums are allowed, things turn towards the other direction: Hindman \cite{MR541340} showed $\mathbb{N} \not\to^+ (\aleph_0)_3$ and Owings asked (and it is still open) that if $\mathbb{N} \not\to^+ (\aleph_0)_2$ is true. Interestingly, Fernández-Bretón and Rinot \cite{MR3710649} showed that the uncountable analogs of Hindman's theorem must necessarily fail in a strong way.

There have been other results in the similar vein showing that every finite partition of some large uncountable structure, there must be some countable substructure that is ``large'' and ``well-behaved''. The exact meaning of ``large'' and ``well-behaved'' depends on specific situations. 
Sometimes it is not possible to get a homogeneous substructure satisfying some ``largeness'' requirement, but there are ways to find such a substructure as close to homogeneous as possible. For example, recent work of Raghavan and Todorcevic \cite{RaghavanTodorcevic} shows the existence of certain large cardinals in the universe \emph{directly implies} that for any finite partition of $[\mathbb{R}]^2$, there exists a homeomorphic copy of $\mathbb{Q}$, say $X\subset \mathbb{R}$, such that $[X]^2$ intersects at most 2 elements in the partition.

The following questions among others were asked by the authors of \cite{6authors}.
\begin{enumerate}
\item Is the use of large cardinals necessary to establish the consistency $\mathbb{R}\to^+(\aleph_0)_r$ for all $r\in \omega$?
\item Is $\mathbb{R}\to^+(\aleph_0)_2$ true in $\mathrm{ZFC}$?
\end{enumerate}

We answer the first question negatively and the second positively.

\begin{theorem}\label{main}
\begin{enumerate}
\item \label{part1} In the forcing extension by $\mathrm{Add}(\omega, \beth_\omega)$, $\mathbb{R}\to^+ (\aleph_0)_{r}$ for any $r\in \omega$.
\item \label{part2} $\mathbb{R}\to^+(\aleph_0)_2$.
\end{enumerate}
\end{theorem}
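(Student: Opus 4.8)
The plan is to reduce, in both cases, the existence of a monochromatic sumset to a finitary combinatorial problem, the essential difficulty being the one the introduction emphasizes: the ``diagonal'' contribution $\{2x:x\in X\}$ to $X+X$, caused by allowing repeated summands. Regard $\mathbb R$ as a $\mathbb Q$-vector space; for a $\mathbb Q$-linearly independent family $E=\langle e_i\rangle$ and $v$ in its span, call the sequence of nonzero coefficients of $v$, read along the support of $v$ in increasing order, the \emph{type} of $v$. The key point is that if $f(v)$ depends only on the type of $v$, then searching for $X$ with $X+X$ monochromatic becomes purely combinatorial. Concretely, if $X=\{v_n\}$ with each $v_n=w+v_n'$ for a fixed $w$ of type $\vec d$ and the $v_n'$ of a common type $\vec c$ placed on pairwise disjoint, increasing blocks of $E$, then $X+X$ is monochromatic exactly when $f$ assigns the type $2\vec d\frown\vec c\frown\vec c$ (the type of $v_m+v_n$ for $m<n$) and the type $2\vec d\frown 2\vec c$ (the type of $2v_n$) the same color; letting the blocks overlap in a fixed ``staircase'' fashion, using types with cancellation such as $(1,-1)$, or using an infinite sequence of types in place of the single $\vec c$, all produce further coincidence requirements, each involving only finitely many types. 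So it suffices to produce, from an arbitrary $r$-coloring of types, one such configuration all of whose ``$X+X$-types'' receive a single color.

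\textbf{Getting type-determinedness.} For part~\eqref{part2} ($r=2$) only a \emph{finite} family of configurations will be needed, so a countable linearly independent $E$ together with a single application of Ramsey's theorem — to the finite coloring of $[\omega]^N$ recording the $f$-values of all combinations of $N$ basis vectors with the finitely many relevant coefficient sequences, followed by passage to an infinite homogeneous subset — makes $f$ type-determined on the relevant types, and what remains is the explicit claim that these finitely many coincidence requirements cannot all be escaped by a $2$-coloring. For part~\eqref{part1} no finite family can suffice, since by Hindman--Leader--Strauss the statement fails whenever $2^\omega<\aleph_\omega$, so it is not a theorem of $\mathrm{ZFC}$; one must instead control $f$ on \emph{all} types at once, and this is precisely what $\mathrm{Add}(\omega,\beth_\omega)$ supplies. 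In $V$ the $\mathrm{GCH}$ gives the (optimal) partition relation $\beth_\omega\to(\omega)^{<\omega}_{\aleph_0}$, and the forcing adds $\beth_\omega$ mutually Cohen-generic reals, which may be taken $\mathbb Q$-independent over $V$ and among whose indices every finite pattern of initial bits is realized $\beth_\omega$ many times. Working in $V$ with ccc nice names for $\dot f$: the value of $f$ on a fixed algebraic combination of Cohen reals indexed by a tuple $\xi_1<\dots<\xi_n$ is named with countable support; a free-set/canonization argument over $\beth_\omega$ (again justified by the $\mathrm{GCH}$) renders these names order-canonical on an infinite index set, and one further use of $\beth_\omega\to(\omega)^{<\omega}_{\aleph_0}$ (which tolerates the $\aleph_0$ many types of each length, hence unbounded coefficients) then yields an infinite $\mathbb Q$-independent $E$ over which $f$ is type-determined.

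\textbf{The main obstacle.} The crux of both arguments is the combinatorial endgame: showing that the ``repetition'' phenomenon $2\vec c$ versus $\vec c\frown\vec c$ forces enough coincidences among the types to defeat every admissible coloring — an explicit non-$2$-colorable finite configuration for part~\eqref{part2}, and a family of configurations of unbounded complexity for part~\eqref{part1}. The subtlety is that configurations built on pairwise-disjoint supports with a common type only ever pit the ``double'' color $f(2\vec c)$ against the ``concatenation'' color $f(\vec c\frown\vec c)$, which an adversary can keep apart; so one is forced into the overlapping (staircase) configurations, cancelling coefficients, or infinite type-sequences, and the hard part is to select the right such configurations. Everything preceding this step — the partition calculus and, for part~\eqref{part1}, the bookkeeping with nice names and free sets — I expect to be routine by comparison.
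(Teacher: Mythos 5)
Your general framework is the right one and matches the paper's: identify $\mathbb{R}$ with a direct sum of copies of $\mathbb{Q}$, view elements as finitely supported functions, and use partition calculus to make the coloring ``type-determined'' on a well-chosen basis so that the problem becomes finitary. But you have left the actual engine of the proof unbuilt. The ``combinatorial endgame'' you defer to is not routine selection work; it is the entire theorem. The paper's solution is the family of strings $s_l$ ($l\le r$) consisting of $2l$ twos followed by $r-l$ fours, together with the observation that if $x_i=\tfrac12\,s_{l'}*\bar a_i$ for tuples $\bar a_i$ built from a fixed prefix (two coordinates per block), a variable staircase middle (one coordinate per block), and a fixed suffix (one coordinate per block), then $2x_i=s_{l'}*\bar a_i$ while $x_i+x_j=s_l*\bar b_{i,j}$, so the $r+1$ colors $\rho_0,\dots,\rho_r$ assigned by type-determinedness to these canonical tuples collide by pigeonhole, and the collision $\rho_{l'}=\rho_l$ produces the desired $X$. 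Without this specific mechanism, a single ``double-versus-concatenation'' configuration (which is all your sketch explicitly describes) is escapable, as you yourself note, and nothing in your proposal forces a collision. Your sketch also underestimates what is needed in part (2): plain Ramsey on $\omega$ does not suffice; the paper uses Dushnik--Miller, $\omega_1\to(\omega+1)^n_k$, precisely because the top point $\alpha_\omega$ of the homogeneous set of order type $\omega+1$ is what anchors the fixed parts of the tuples $\bar a_i$ and $\bar b_{i,j}$.

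There is also a concrete error in your plan for part (1). You claim that GCH in $V$ gives $\beth_\omega\to(\omega)^{<\omega}_{\aleph_0}$. This infinite-exponent partition relation is a large-cardinal property (it is essentially the definition of an Erd\H{o}s cardinal $\kappa(\omega)$, which lies above weak compactness and is inconsistent with $V=L$ at any accessible cardinal, let alone $\beth_\omega$), so invoking it defeats the entire purpose of the theorem, which is to remove large cardinals from the Komj\'ath--Leader--Russell--Shelah--Soukup--Vidny\'anszky result. What the paper actually uses is the higher-dimensional $\Delta$-system lemma (Lemma~\ref{Cleanup}), whose hypothesis is a \emph{fixed finite exponent} relation $\lambda\to(\aleph_1)^{2d}_{2^\omega}$ following from Erd\H{o}s--Rado in ZFC, applied with $d=2r$ for the fixed $r$ at hand. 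The $\Delta$-system lemma also supplies the order-isomorphism coherence (\ref{CL2}--\ref{CL4}) between the root sets $W(u)$, which is what lets Claim~\ref{ModularOperation} transfer decided values of the names under coordinate replacement; the ``free-set/canonization'' step you gesture at needs exactly this coherence, and it is not automatic. Your overall outline is in the right spirit, but the two load-bearing ingredients — the $s_l$ string family with its pigeonhole, and the finite-exponent $\Delta$-system machinery in place of an infinite-exponent relation — are precisely what is missing.
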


\begin{remark}
The continuum in the model of \cite{6authors} is an $\aleph$-fixed point, which is very large. Over a ground model of $\mathrm{GCH}$, Theorem \ref{main} suggests that the most natural way to eliminate the obstacles from cardinal arithmetic works since by a result of Hindman, Leader and Strauss \cite{MR3696151}, if $\mathbb{R}\to^+(\aleph_0)_r$ for all $r<\omega$, then $2^\omega\geq \aleph_{\omega+1}$. 
\end{remark}

\begin{notation}
We will identify $(\mathbb{R}, +)$, as a vector space over $\mathbb{Q}$, with $\bigoplus_{i<2^\omega}\mathbb{Q}$. The latter is the direct sum of $2^\omega$ copies of $(\mathbb{Q},+)$. More concretely, any $s\in \bigoplus_{i<2^\omega}(\mathbb{Q},+)$ is a finitely supported function whose range is contained in $\mathbb{Q}$. The addition on the direct sum is defined coordinate-wise. Similarly for some cardinal $\kappa$, $\bigoplus_{i<\kappa} \mathbb{N}$ is the direct sum of $\kappa$ copies of $(\mathbb{N}, +)$. It is easy to see that if $\kappa\leq 2^\omega$, $\bigoplus_{i<\kappa} \mathbb{N}$ is an additive substructure of $\mathbb{R}$.
\end{notation}

\section{The proof of Theorem \ref{main}}\label{mainresult}

\begin{definition}[\cite{LeaderRussell},\cite{MR3696151},\cite{6authors}]\label{strings}
For any $r\geq 2$, define a sequence of finite strings of natural numbers $\langle s_{r,l}: l\leq r\rangle$ such that for each $l\leq r$, $|s_l|=r+l$ and $s_l(k) = \begin{cases}
2 &  \text{if }k<2l \\
4 &  \text{otherwise}.
\end{cases}
$
In other words, each $s_l$ is formed by $2l$ many $2's$ followed by $r-l$ many $4's$.
\end{definition}

\begin{remark}\label{remarkstrings}
Since $r$ will be fixed in the proof of Theorem \ref{main}, we will suppress the subscripts and write $\langle s_l: l\leq r\rangle$ for $\langle s_{r,l}: l\leq r\rangle$.
\end{remark}

\begin{definition}[The star operation, see \cite{LeaderRussell},\cite{6authors}]
Let $K$ be either $\mathbb{N}$ or $\mathbb{Q}$.
For $k\in \omega$, $s\in (K-\{0\})^k$ and a finite subset of ordinals $a=\{\xi_i: i< k\}_< \subset \lambda$, let $s*a$ denote the function from $\lambda$ to $K$ supported on $a$ that sends $\xi_i$ to $s(i)$.

\end{definition}

We first prove part (\ref{part2}), which is simpler but contains some ideas which will be used later in the proof of part (\ref{part1}).

\begin{proof}[Proof of part (\ref{part2})]

First we use the following partition theorem, which goes back to Dushnik-Miller (see Theorem 11.3 in \cite{MR795592}). 
\begin{claim}\label{DM}
For any $n,k\in \omega$, any $f: [\omega_1]^n\to k$, there exists $A\subset \omega_1$ of order type $\omega+1$, such that $f\restriction A$ is constant.
\end{claim}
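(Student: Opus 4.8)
The claim is the partition relation $\omega_1\to(\omega+1)^n_k$ (reading ``$f\restriction A$ constant'' as ``$f$ constant on $[A]^n$''), a standard strengthening of the Dushnik--Miller theorem, and I would prove it by induction on $n$, with $k$ arbitrary throughout.

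The case $n=1$ is the pigeonhole principle: some colour class is uncountable, hence contains a set of order type $\omega+1$. The case $n=2$ is Dushnik--Miller: from $\omega_1\to(\omega_1,\omega+1)^2$ one gets the $k$-colour form using the sharper (also classical) relation $\omega_1\to(\omega_1,\alpha)^2$ for countable $\alpha$ together with the fact that for each $k$ there is a countable ordinal $\beta_k$ with $\beta_k\to(\omega+1)^2_k$ — so inside an uncountable set that omits one colour one restricts to a sufficiently long well-ordered block and recurses on the number of colours.

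For $n\geq 3$, given $f\colon[\omega_1]^n\to k$, the plan is to fix a target colour $c^\ast$ and build, by recursion on $m<\omega$, an increasing sequence $a_0<a_1<\cdots$ of countable ordinals and a decreasing chain of uncountable sets $\omega_1=R_{-1}\supseteq R_0\supseteq R_1\supseteq\cdots$ (under $\subseteq$), keeping: $a_m\in R_{m-1}$; every $\beta\in R_m$ satisfies $f(s\cup\{\beta\})=c^\ast$ for all $s\in[\{a_0,\dots,a_m\}]^{n-1}$; and each $R_m$ is co-countable in $R_{m-1}$ from the stage on where these colour conditions become non-vacuous (which is also where $c^\ast$ gets chosen). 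Once this is in place, $f$ is constant with value $c^\ast$ on $[\{a_m:m<\omega\}]^n$ — every $n$-subset is $s\cup\{a_m\}$ with $m$ largest and $a_m\in R_{m-1}$ — and $\bigcap_{m<\omega}R_m$, being co-countable in one of the $R_j$, is uncountable; picking $a_\omega$ in it above $\sup_m a_m$ gives a monochromatic set of order type $\omega+1$.

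The hard part will be the successor step of this recursion: one must be able to pick $a_m\in R_{m-1}$ so that $\{\beta\in R_{m-1}:\beta>a_m,\ f(s_0\cup\{a_m,\beta\})=c^\ast\text{ for all }s_0\in[\{a_0,\dots,a_{m-1}\}]^{n-2}\}$ stays co-countable in $R_{m-1}$. When no such $a_m$ exists, a pigeonhole over the finitely many sets $s_0$ and over colours produces an uncountable $W\subseteq R_{m-1}$, a fixed $s_0^\ast$, and a colour $e\neq c^\ast$ for which the auxiliary $2$-variable colouring $\{\gamma,\beta\}\mapsto f(s_0^\ast\cup\{\gamma,\beta\})$ on $[W]^2$ resists the target; one then applies the already-proved case $\omega_1\to(\omega+1)^2_k$ on $W$ to recover a long $f$-controlled block and restarts the construction there with $e$ as the new target, with bookkeeping ensuring this can happen only finitely often. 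Getting $c^\ast$ right at the outset and organizing those restarts is the technical core — exactly the point where the Dushnik--Miller argument does its work — and I expect essentially all the care in the proof to go there, the recursion skeleton being routine.
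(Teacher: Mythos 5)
Your proposal diverges from the paper's argument in a way that introduces a real gap, and the gap sits exactly where you flag it: the restart bookkeeping. The paper proves the claim by the end-homogeneous / elementary-submodel method, and the essential point you are missing is that one should \emph{not} try to fix a target colour $c^*$ in advance. Take $N\prec H(\omega_2)$ countable with $f\in N$ and $\delta=N\cap\omega_1$; recursively build $A\subseteq\delta$ so that for every increasing $n$-tuple from $A$, $f(x_0,\dots,x_{n-1})=f(x_0,\dots,x_{n-2},\delta)$. This is possible because for each $\bar y\in[A']^{n-1}$ already built, the set $B_{\bar y}=\{\alpha: f(\bar y\cup\{\alpha\})=f(\bar y\cup\{\delta\})\}$ \emph{contains $\delta$} and lies in $N$, hence is unbounded in $\delta$; intersecting finitely many such sets stays unbounded. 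One then colours $[A]^{n-1}$ by $g(\bar a)=f(\bar a\cup\{\delta\})$, applies ordinary Ramsey to get an infinite $A^*\subseteq A$ monochromatic for $g$, and $A^*\cup\{\delta\}$ works. The colour emerges at the very end, from Ramsey; nothing is guessed.

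By contrast, your plan commits to $c^*$ early and must \emph{fully} homogenize $f(s\cup\{\beta\})=c^*$ as the $a_m$ are added, and when this becomes impossible you invoke the $n=2$ case on some uncountable $W$ to ``restart.'' Two concrete problems. First, $\omega_1\to(\omega+1)^2_k$ hands you a set of order type $\omega+1$, not an uncountable one, so it cannot serve as the new ambient $R_m$ in which the construction (which repeatedly shrinks uncountable sets) is supposed to continue; and applying the weaker $\omega_1\to(\omega_1,\alpha)^2$ instead forces you to control \emph{which} colour the uncountable homogeneous piece carries, which you do not. Second, there is no invariant proposed that prevents the target from cycling $c^*\to e\to c^*\to\cdots$; ``bookkeeping ensuring this can happen only finitely often'' is asserted, not argued, and since after a restart all previously chosen $a_m$ are discarded, nothing visibly decreases. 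You yourself identify this as the technical core and defer it, so this is an acknowledged gap, but I would add that the difficulty is an artifact of the chosen architecture: requiring the growing end-homogeneous block to already have a fixed colour is strictly harder than end-homogeneity, and is what creates the need for restarts in the first place. Switching to the paper's scheme (end-homogeneous via a fixed $\delta$ from an elementary submodel, Ramsey last) removes the problem rather than having to solve it.

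One smaller remark: the co-countable-filter accounting you use for $\bigcap_m R_m$ is fine as far as it goes, but note that even in the end-homogeneous variant, if you avoid elementary submodels you cannot in general shrink to a \emph{co-countable} set on which $\beta\mapsto f(\bar y\cup\{\beta\})$ is constant -- one of the $k$ colour classes is merely uncountable. The submodel trick is precisely what makes this selection problem disappear, because the relevant set $B_{\bar y}$ automatically contains $\delta$.
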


\begin{proof}[Proof of the claim]
Given $n, k, f$ as in the claim, find a countable elementary submodel $N\prec H(\omega_2)$ containing $f$. We may assume $n\geq 2$.
Let $\delta=_{def} N\cap \omega_1$. Recursively, we can build an infinite set $A\subset \delta$ such that for any increasing $\{x_i: i<n\}\subset A$, $f(x_0,\cdots, x_{n-1})=f(x_0,\cdots, x_{n-2}, \delta)$. To see how this is done, suppose at some stage we have built a finite set $A'$ satisfying the requirement above and we demonstrate how to augment the set by one more element. For each $\bar{y}\in [A']^{n-1}$, let $i_{\bar{y}}=f(\bar{y}\cup \{\delta\})\in k$. Clearly $\langle i_{\bar{y}}: \bar{y}\in [A']^{n-1}\rangle \in N$. For each $\bar{y}\in [A']^{n-1}$, consider $B_{\bar{y}}=\{\alpha<\omega_1: f(\bar{y}\cup \{\alpha\})=i_{\bar{y}}\}$. Then $\bigcap_{\bar{y}\in [A']^{n-1}} B_{\bar{y}}$ is unbounded in $\delta$ since $\delta\in \bigcap_{\bar{y}\in [A']^{n-1}} B_{\bar{y}}$ and $\bigcap_{\bar{y}\in [A']^{n-1}} B_{\bar{y}}\in N$. Picking any element from $(\delta\cap \bigcap_{\bar{y}\in [A']^{n-1}} B_{\bar{y}} )- (\sup A' +1)$ and adding it to $A'$ will maintain the satisfaction of the requirement.

Finally, consider $g: [A]^{n-1} \to k$ defined by $g(\bar{a})=f(\bar{a}\cup \{\delta\})$. Apply the Ramsey theorem, we get an infinite set $A^*\subset A$ such that $g$ is constant on $[A^*]^{n-1}$ with color $j<k$. It is clear that $f\restriction  [A^*\cup \{\delta\}]^n\equiv j$.
\end{proof}

Returning back to the proof of part (\ref{part2}), we actually prove a stronger statement: $\bigoplus_{i<\omega_1}\mathbb{N}\to^+(\aleph_0)_2$. Recall $\langle s_i: i\leq 2\rangle=\langle s_{2,i}: i\leq 2\rangle$ as in Definition \ref{strings} and Remark \ref{remarkstrings}. Let $f: \bigoplus_{i<\omega_1}\mathbb{N} \to 2$ be given. Let $d_i (\bar{a}) = f(s_i* \bar{a})$ be defined for $i<3$. In particular, the domain of $d_i$ is $[\omega_1]^{i+2}$ for $i<3$. Apply Claim \ref{DM} to get $A=\{\alpha_j: j\leq \omega\}\in [\omega_1]^{\omega+1}$ such that $d_i\restriction [A]^{i+2}\equiv \rho_i<2$ for all $i<3$. By the Pigeon Hole principle we have the following cases and we will define $X=\{x_i: i\in \omega\}$ for each case. 
\begin{enumerate}
\item $\rho_0=\rho_1=\rho$. Let $x_i=\dfrac{1}{2} s_0 * (\alpha_i, \alpha_\omega)$. Then $f(2x_i)=f(s_0*(\alpha_i, \alpha_\omega))=d_0(\alpha_i, \alpha_\omega)=\rho_0=\rho$. For any $i<j\in \omega$, $f(x_i+x_j)=f(s_1*(\alpha_i,\alpha_j,\alpha_\omega))=d_1(\alpha_i,\alpha_j,\alpha_\omega)=\rho_1=\rho$.
\item $\rho_0=\rho_2=\rho$. Let $x_i=\dfrac{1}{2} s_0 * (\alpha_{2i},\alpha_{2i+1})$. Then $f(2x_i)=f(s_0*(\alpha_{2i}, \alpha_{2i+1}))=d_0(\alpha_{2i}, \alpha_{2i+1})=\rho_0=\rho$. For any $i<j\in \omega$, $f(x_i+x_j)=f(s_2*(\alpha_{2i},\alpha_{2i+1},\alpha_{2j},\alpha_{2j+1}))=d_2(\alpha_{2i},\alpha_{2i+1},\alpha_{2j},\alpha_{2j+1})=\rho_2=\rho$.
\item $\rho_2=\rho_1=\rho$. Let $x_i=\dfrac{1}{2} s_0 * (\alpha_{0},\alpha_{1}, \alpha_{i+2})$. Then $f(2x_i)=f(s_0*(\alpha_{0},\alpha_{1}, \alpha_{i+2}))=d_0(\alpha_{0},\alpha_{1}, \alpha_{i+2})=\rho_0=\rho$. For any $i<j\in \omega$, $f(x_i+x_j)=f(s_2*(\alpha_{0},\alpha_{1},\alpha_{i+2},\alpha_{j+2}))=d_2(\alpha_{0},\alpha_{1},\alpha_{i+2},\alpha_{j+2})=\rho_2=\rho$.
\end{enumerate}
\end{proof}

Clearly the proof above does not generalize to the case when $r=3$ since $2^\omega\not\to (\omega+2)^3_2$. A more fundamental restriction is that by a result of Hindman, Leader and Strauss \cite{MR3696151}, there exists some $r\in \omega$ such that $\bigoplus_{i<\omega_1}\mathbb{N}\not\to^+ (\aleph_0)_{r}$.

We dedicate the rest of the article to proving part (\ref{part1}). Let $\lambda=\beth_{\omega}$ and $\mathbb{P}=\mathrm{Add}(\omega,\lambda)$. In fact, we show that in $V^{\mathbb{P}}$, $\bigoplus_{i<\lambda} \mathbb{N}\to^+ (\aleph_0)_{r}$ for any $r\in \omega$.

\begin{definition}
Suppose $W,W'\subset \lambda$ are such that $type(W)=type(W')$. Let $h_{W,W'}: W\to W'$ be the unique order isomorphism. For $A, A'\subset \lambda$ with $type(A)=type(A')$, $h_{A, A'}$ naturally induces a map from $\mathbb{P}\restriction A$ to $\mathbb{P}\restriction A'$ where any $p\in \mathbb{P}\restriction A$ is mapped to $p'\in \mathbb{P}\restriction A'$ such that $dom(p')=h_{A,A'} (dom(p))$ and $p'(j)=p(h_{A, A'}^{-1}(j))$. We will abuse the notation by using $h_{A, A'}$ to denote the induced map from $\mathbb{P}\restriction A$ to $\mathbb{P}\restriction A'$. This can be easily inferred from the context.
\end{definition}

We will use the following combinatorial lemma due to Todor\v{c}evi\'{c} in \cite{MR874786}.

\begin{lem}[The higher dimensional $\Delta$-system lemma]\label{Cleanup}
Fix $r, d\in \omega$.
Let $\langle \dot{d}_i: [\lambda]^{i}\to r | i\leq  d\} $ be a sequence of $\mathbb{P}$-names for colorings. Then there exists $E\subset \lambda$ of order type $\omega_1$ and $W: [E]^{\leq d}\to [\lambda]^{\leq \aleph_0}$ such that 
\begin{enumerate}[label=\textbf{CL.\arabic*}]
\item \label{CL1} For all $u\in [E]^{\leq d}$, $u\subset W(u)$ and $\mathbb{P}\restriction W(u)$ contains a maximal antichain deciding the value of $\dot{d}_{|u|}(u)$.
\item \label{CL2} For any $u,v\in [E]^{\leq d}$ such that $|u|=|v|$, $type(W(u))=type(W(v))$, $h_{W(u), W(v)}(u)=v$ and for any $p\in \mathbb{P}\restriction W(u)$, for any $n<r$, $p\Vdash \dot{d}_{|u|}(u)=n \Leftrightarrow h_{W(u), W(v)}(p)\Vdash \dot{d}_{|v|}(v)=n$.
\item \label{CL3}  For any $u, v \in [E]^{\leq d}$, $W(u)\cap W(v)=W(u\cap v)$.
\item \label{CL4}  For any $u_1\subset u_2, u'_1\subset u'_2 $ where $u_2, u_2'\in [E]^{\leq d}$, if $(u_2, u_1,<)\simeq (u_2', u_1', <),$ then $h_{W(u_2), W(u_2')}\restriction W(u_1) = h_{W(u_1), W(u_1')}$.

\end{enumerate}
\end{lem}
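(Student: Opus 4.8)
\medskip
\noindent\textbf{Proof proposal.} The plan is to build $E$ and $W$ out of a single family of countable elementary submodels, obtain CL.1, CL.2, CL.4 directly from elementarity, and use the Erd\H{o}s--Rado theorem to homogenize. Fix a regular $\theta$ with $\mathbb{P},\lambda,\langle\dot d_i:i\le d\rangle\in H(\theta)$, a well-ordering $<^*$ of $H(\theta)$, and a countable parameter set $\bar p$ containing $\omega,\lambda,\mathbb{P},<^*$ and the names; work in $\mathfrak{A}=(H(\theta),\in,<^*)$. For $u\in[\lambda]^{\le d}$ let $M_u\prec\mathfrak{A}$ be the Skolem hull of $u\cup\bar p$ and put $W_0(u)=M_u\cap\lambda$; then $u\subseteq W_0(u)$ and $u\mapsto M_u$ is $\subseteq$-monotone. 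This already gives CL.1 for $W:=W_0\restriction[E]^{\le d}$ for \emph{any} $E$: by ccc-ness and elementarity $M_u$ contains the $<^*$-least maximal antichain $A$ of $\mathbb{P}$ deciding $\dot d_{|u|}(u)$; since $A$ is countable and $A\in M_u$, in fact $A\subseteq M_u$, so every $p\in A$ has support inside $W_0(u)$, and $A$ remains a maximal antichain of $\mathbb{P}\restriction W_0(u)$ because $\mathrm{Add}(\omega,\lambda)$ is the product of $\mathbb{P}\restriction W_0(u)$ with $\mathbb{P}\restriction(\lambda\setminus W_0(u))$.

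Next I would homogenize. Colour $w\in[\lambda]^{2d}$ by a code for the isomorphism type of $(M_w,\in,<^*,\bar p,w)$; from this code one also reads off, for each $v\subseteq w$, the structure $(M_v,\in,<^*,\bar p,v)$ and the set $M_v\cap w$, since $M_v=\mathrm{Hull}(v\cup\bar p)$ is definable inside $M_w$. There are at most $2^{\aleph_0}=\beth_1$ colours, and since $2d<\omega$ we have $\beth_{2d}^+<\beth_\omega=\lambda$ and $\beth_1<\lambda$, so the Erd\H{o}s--Rado theorem $\beth_{2d}^+\to\bigl((2^{\aleph_0})^+\bigr)^{2d}_{2^{\aleph_0}}$ yields a set $E_1\in[\lambda]^{(2^{\aleph_0})^+}$ that is a set of order-indiscernibles for $\mathfrak{A}$ over $\bar p$ for tuples of length $\le 2d$. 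Constancy of the pattern $v\mapsto M_v\cap w$ forces $M_v\cap E_1=v$ for all $v\in[E_1]^{\le d}$ — otherwise, completing $v$ to a $2d$-element subset of $E_1\setminus M_v$ (possible as $|E_1\setminus M_v|=(2^{\aleph_0})^+$) would contradict constancy. For any $E\subseteq E_1$, CL.2 and CL.4 are then automatic: for $u,v\in[E]^{\le d}$ with $|u|=|v|$, indiscernibility makes $h_{u,v}$ extend to a unique isomorphism $\pi_{u,v}\colon M_u\to M_v$ fixing $\bar p$; since $\pi_{u,v}$ is an $\in$-isomorphism it is order preserving on ordinals, so $\pi_{u,v}\restriction W_0(u)=h_{W_0(u),W_0(v)}$ and $\pi_{u,v}(p)=h_{W_0(u),W_0(v)}(p)$ for $p\in\mathbb{P}\restriction W_0(u)\subseteq M_u$; as $\mathbb{P}$ and $\langle\dot d_i\rangle$ lie in $\bar p$ and are fixed by $\pi_{u,v}$, transporting the statement $p\Vdash\dot d_{|u|}(u)=n$ along $\pi_{u,v}$ through $M_u,M_v\prec\mathfrak{A}$ gives CL.2; and $\pi_{u_2,u_2'}$ carries $M_{u_1}=\mathrm{Hull}(u_1\cup\bar p)$ onto $M_{u_1'}$, extends $h_{u_1,u_1'}$, and fixes $\bar p$, hence equals $\pi_{u_1,u_1'}$ on $W_0(u_1)$, which is CL.4.

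It remains to secure CL.3, which — the reverse inclusion being monotonicity — says $M_u\cap M_v\cap\lambda\subseteq M_{u\cap v}$ for $u,v\in[E]^{\le d}$. Here I would choose $E=\{e_\alpha:\alpha<\omega_1\}\subseteq E_1$ to be \emph{sparse} in $E_1$, e.g.\ letting $e_\alpha$ be the $(\omega\cdot(1+\alpha))$-th element of $E_1$, so that infinitely many members of $E_1$ lie below $e_0$ and strictly between any two consecutive $e_\alpha$'s. Given $\delta\in(M_u\cap M_v\cap\lambda)\setminus M_{u\cap v}$, write $\delta$ as a Skolem term $\delta=s(\bar a,\bar p)$ with $\bar a\in[u]^{<\omega}$ of least possible size, and likewise $\delta=s'(\bar b,\bar p)$ with $\bar b\in[v]^{<\omega}$; since $\delta\notin M_{u\cap v}$ there is some $a\in\bar a\setminus v$. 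By sparseness, the $E_1$-gap of $(\bar a\setminus\{a\})\cup\bar b$ that contains $a$ contains another point $a'\in E_1$, so by indiscernibility the map fixing $\bar p$ and $(\bar a\setminus\{a\})\cup\bar b$ pointwise and sending $a\mapsto a'$ extends to an isomorphism of the relevant Skolem hulls; it fixes $\delta=s'(\bar b,\bar p)$ yet carries $s(\bar a,\bar p)$ to $s(\bar a[a\mapsto a'],\bar p)$, so $\delta=s(\bar a[a\mapsto a'],\bar p)$ as well. Iterating this as an Ehrenfeucht--Mostowski-style back-and-forth — repeatedly sliding the coordinates of $\bar a$ and of $\bar b$ that lie in $u\triangle v$ into the fresh stretches of $E_1$ while invoking minimality of $\bar a$ — one arrives at a representation of $\delta$ mentioning no member of $u\triangle v$, i.e.\ $\delta\in M_{u\cap v}$, a contradiction.

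The \textbf{main obstacle} is precisely this last step: organizing the sliding so that it terminates and so that the moves never disturb the finitely many coordinates that must remain fixed. This is exactly where sparseness of $E$ inside the large indiscernible set $E_1$ is needed, and it is the combinatorial heart of Todor\v{c}evi\'{c}'s lemma; by contrast, the hypothesis $\lambda=\beth_\omega$ is used only in the Erd\H{o}s--Rado step, where it is what makes the construction go through uniformly for every finite $d$ (hence, in the application, for every $r$).
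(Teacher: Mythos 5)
The paper does not actually prove Lemma~\ref{Cleanup}; it cites Todor\v{c}evi\'{c}'s double $\Delta$-system theory \cite{MR874786}, Shelah's Claim~7.2.a in \cite{MR2520110}, the appendix of \cite{Zhang1}, and (per the referee) Baumgartner's canonical partition theorem \cite{BaumgartnerCanonical}. So there is no in-paper argument to compare against; what you have written is a self-contained reconstruction, and it follows precisely the Baumgartner-style route that Remark~2.7 points to: Skolem hulls $M_u=\mathrm{Hull}(u\cup\bar p)$ in $(H(\theta),\in,<^*)$, Erd\H{o}s--Rado canonization of the map $w\mapsto$ (iso type of $(M_w,\in,<^*,\bar p,w)$), and a sparse choice of $E$ inside the homogeneous set $E_1$. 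Your treatment of \ref{CL1} (ccc plus elementarity plus finite supports), \ref{CL2} and \ref{CL4} (the unique $\in$-isomorphism $\pi_{u,v}\colon M_u\to M_v$ fixing $\bar p$ and extending $h_{u,v}$; it restricts to $h_{W(u),W(v)}$ on ordinals and transports forcing statements), and the observation $M_v\cap E_1=v$ are all sound, and the Erd\H{o}s--Rado arithmetic ($\beth_{2d}^+\to((2^{\aleph_0})^+)^{2d}_{2^{\aleph_0}}$ with $\beth_{2d}^+<\beth_\omega$) is correct.

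The gap you flag in \ref{CL3} is real and is not merely bookkeeping. Your first slide gives $\delta=s(\bar a,\bar p)=s(\bar a[a\to a'],\bar p)$ with $a'$ fresh, and indiscernibility then forces $s((\bar a\setminus\{a\})[x],\bar p)$ to be constant on all $x\in E_1$ lying in that gap; but $a'\notin u\cup v$, so ``minimality of $\bar a$'' does not yet bite, and ``iterating the sliding'' only replaces coordinates by other fresh $E_1$-points --- it never by itself produces a term in $\bar p\cup(u\cap v)$ only. What is missing is the step that converts ``constant on $E_1\cap\text{gap}$'' into an element of $M_{\bar a\setminus\{a\}}$: one has to exhibit a Skolem term in $(\bar a\setminus\{a\},\bar p)$ whose value is provably $\delta$, e.g.\ by showing the function $x\mapsto s((\bar a\setminus\{a\})[x],\bar p)$ is (weakly) monotone in $x$ on the gap by indiscernibility (its values are ordinals and the order relation between $s(\cdot,x,\cdot)$ and $s(\cdot,y,\cdot)$ depends only on the order type of $(\bar c,x,y)$), so that constancy on two $E_1$-points already pins down the common value as the $<^*$-least value attained cofinally in the gap, which is definable from $\bar c\cup\bar p$. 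Without some argument of that shape the contradiction with minimality of $\bar a$ is not reached, and this is exactly the ``combinatorial heart'' you name. I would accept this as a correct high-level reconstruction of the cited proof, with \ref{CL3} left as a genuine (but standard and fillable) gap, matching what the cited sources actually establish.
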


\begin{remark}\label{stronger}
The concept of a \emph{double $\Delta$-system} appeared in Section 2 and 3 of \cite{MR874786}, which can be regarded as a blueprint for the 2-dimensional (that is, when $d=2$) $\Delta$-system lemma as stated in Lemma \ref{Cleanup}. Different variations of Lemma \ref{Cleanup} appeared in \cite{MR1031773}, Lemma 4.1 of \cite{MR1218224}, Claim 7.2.a of \cite{MR2520110} and the appendix of \cite{Zhang1}. 
The fact that $\lambda\to (\aleph_1)^{2d}_{2^\omega}$ suffices to get the conclusion of Lemma \ref{Cleanup}. The interested readers are directed to the proofs in Claim 7.2.a of \cite{MR2520110} (for \ref{CL1},\ref{CL2},\ref{CL3}) and in the appendix of \cite{Zhang1} (for \ref{CL4}).
\end{remark}

\begin{remark}
The referee pointed out that Lemma \ref{Cleanup} can also be obtained using Theorem 1 from Baumgartner \cite{BaumgartnerCanonical}, which is a generalization of the Erd\H{o}s-Rado theorem \cite{doi:10.1112/jlms/s1-25.4.249}.
\end{remark}

Let $G\subset \mathbb{P}$ be generic over $V$. In $V[G]$, suppose $f: \bigoplus_{i<\lambda} \mathbb{N}\to r$ is the given coloring. Define $d_i: [\lambda]^{r+i}\to r$ such that $d_i(\bar{a})=f(s_i*\bar{a})$ for $i\leq r$. Let $\dot{d}_i$ for $i\leq r$ be the corresponding names.

Back in $V$, apply Lemma \ref{Cleanup} to $d=2r$ and $\langle \dot{d}_i: i\leq r\rangle$, and find the desired $E$ and $W$ (strictly speaking, we should apply to the sequence $\langle \dot{d}'_{i+r}: i\leq r\rangle$ where $\dot{d}'_{i+r}=\dot{d}_i$ for $i\leq r$). Enumerate $E$ increasingly as $\{e_i: i\in \omega_1\}$. Let $A_i=\{e_{\omega\cdot i+j}: 1\leq j\leq \omega\}$ for each $i<r$. For each $i<r, j\leq \omega$, let $\alpha^i_j=e_{\omega\cdot i+(1+j)}$.

\begin{definition}
For any $l\leq r$ and any tuple $\bar{s}\in \Pi_{i<l} [A_i]^2 \times \Pi_{i\geq l, i<r} A_i$, we naturally identify $\bar{s}$ as an $(r+l)$-tuple. To be more concrete, we take 2 elements from each of the first $l$ sets ordered naturally and 1 element from each of the remaining sets.
\begin{enumerate}  
\item $\bar{s}$ is \emph{$l$-canonical} if $\bar{s}$ is of the form $$(\alpha^0_{i_0}, \alpha^0_{i_0'}, \cdots, \alpha^{l-1}_{i_{l-1}}, \alpha^{l-1}_{i_{l-1}'}, \alpha^l_{i_l}, \alpha^{l+1}_{i_{l+1}}, \cdots, \alpha^{r-1}_{i_{r-1}})$$ such that for any $k<l$, $i_k<i_k'\leq \omega$ and $\max\{i_{m}: m<r, i_m<\omega\}<i_k'$ for any $k<l$. If, in addition, we are given a sequence $\langle D_i\subset A_i: i<r\rangle$, then we say $\bar{s}$ is \emph{from} $\langle D_i: i<r\rangle$ if $\bar{s}\in \Pi_{i<l} [D_i]^2 \times \Pi_{i\geq l, i<r} D_i$.
\item We call $\bar{i}=\langle i_k: k<r\rangle$ the \emph{index} of an $l$-canonical tuple $\bar{s}$. We say that $\bar{s}$ is \emph{index-strictly-increasing} if whenever $k<k'<r$, $i_k\leq i_{k'}$ and if $i_k\in \omega$, then $i_k<i_{k'}$.

\item For any two ordinals $\alpha, \alpha'$, let $\bar{s}_{\alpha\to \alpha'}$ denote the tuple obtained by replacing the occurrence of $\alpha$ in $\bar{s}$ by $\alpha'$. Similarly for any two sequences of ordinals $\bar{\alpha}, \bar{\alpha}'$ of the same length, $\bar{s}_{\bar{\alpha}\to \bar{\alpha}'}$ denotes the tuple obtained by replacing the occurence of $\alpha_i$ in $\bar{s}$ by $\alpha'_i$ for each $i<|\bar{\alpha}|$. This notation is used to make the statements of Claim \ref{ModularOperation} and Claim \ref{shrinking} precise.

\end{enumerate}
\end{definition}

\begin{notation}
Many times in what follows, we confuse a tuple with the set that consists of elements from the tuple, namely $\bar{s}=\langle s_i: i<n\rangle$ is identified with $\{s_i: i<n\}$. It can be mostly inferred from the context, for example $W(\bar{s})=W(\{s_i: i<n\})$ and $W(\bar{s}\cap \bar{t})=W(\{s_i: i<n\}\cap \{t_j: j<m\})$ where $\bar{t}=\langle t_j: j<m\rangle$.
\end{notation}

\begin{claim}\label{ModularOperation}
In $V[G]$, for any $j<r$ and for any finite $B_i\subset A_i$ with $a^i_\omega\in B_i$ for $i<r$, there exists arbitrarily large $\alpha\in A_j\backslash \{\alpha^j_\omega\}$ such that $\alpha>B_j\backslash \{\alpha^j_\omega\}$ and the following is true: 
for any $l\leq r$, any $l$-canonical tuple $\bar{s}$ from $\langle B_i: i<r\rangle$ containing $\alpha^j_\omega$, $d_l(\bar{s}')=d_l(\bar{s})$ where $\bar{s}'=\bar{s}_{\alpha^j_\omega \to \alpha}$.

\end{claim}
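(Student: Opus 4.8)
The plan is to extract the value $\alpha$ from the indiscernibility built into Lemma \ref{Cleanup}, specifically from property \ref{CL2}. Fix $j<r$ and the finite sets $B_i\subset A_i$ with $\alpha^i_\omega\in B_i$. The point is that there are only finitely many $l$-canonical tuples $\bar s$ from $\langle B_i:i<r\rangle$ that contain $\alpha^j_\omega$ (as $l$ ranges over $\{0,\dots,r\}$), and for each such $\bar s$ the color $d_{|\bar s|}(\bar s)$ in $V[G]$ is decided by some condition in $G$. First I would fix such a tuple $\bar s$, with $|\bar s|=r+l$, say, and note that $\alpha^j_\omega=e_{\omega\cdot j+(1+\omega)}$ occupies one coordinate of $\bar s$, call it the $k$-th coordinate of the associated $(r+l)$-element subset $u=\{s_0<\dots<s_{r+l-1}\}$ of $E$. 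For any other ordinal $\alpha\in A_j$ with $\alpha>B_j\setminus\{\alpha^j_\omega\}$ and $\alpha$ large enough, the set $u'=\bar s_{\alpha^j_\omega\to\alpha}$ is again an $(r+l)$-element subset of $E$, and — this is the crucial order-type check — $(u,<)$ and $(u',<)$ are order-isomorphic via a map carrying $u$ to $u'$ and fixing every element except sending $\alpha^j_\omega\mapsto\alpha$; indeed, since $\alpha^j_\omega$ is the supremum of $A_j$ in $E$ while the other elements of $u$ come from $B_i$ for $i\neq j$ together with $B_j\setminus\{\alpha^j_\omega\}$, replacing it by a sufficiently large $\alpha\in A_j$ below $\sup A_j$ preserves the relative order. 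Hence by \ref{CL2}, for every $n<r$ and every $p\in\mathbb P\restriction W(u)$ we have $p\Vdash\dot d_{l}(u)=n$ iff $h_{W(u),W(u')}(p)\Vdash\dot d_l(u')=n$.

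Next I would use \ref{CL1}: there is a maximal antichain in $\mathbb P\restriction W(u)$ deciding $\dot d_l(u)$, and the condition $q\in G$ deciding it, say $q\Vdash\dot d_l(u)=n_{\bar s}$, may be taken in $\mathbb P\restriction W(u)$. Now I need to find a single $\alpha\in A_j$ that works simultaneously for all the finitely many relevant tuples $\bar s$, and moreover such that $h_{W(u),W(u')}(q)$ also lands in $G$ — the latter is what forces $d_l(u')=n_{\bar s}=d_l(u)$ in $V[G]$. For a fixed $\bar s$, since $W$ is a $\Delta$-system (by \ref{CL3}), the conditions $\{h_{W(u),W(u')}(q):\alpha\in A_j\text{ admissible}\}$ form an antichain-free family only on a controlled part, but the standard point is that $q$ is below some condition $q_0\in G$ whose domain is the "root" part of $W(u)$; for the cofinally many admissible $\alpha$ with $h_{W(u),W(u')}(q)$ compatible with the generic filter's restriction to the root, the image $h_{W(u),W(u')}(q)$ differs from $q$ only on the "moving" coordinates of the $\Delta$-system, which are disjoint from $G$'s already-determined part. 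More carefully: working in $V[G]$, the set of $\alpha\in A_j$ such that $h_{W(u),W(u')}(q)\in G$ is dense-below-$q_0$ in a suitable sense because $\mathrm{Add}(\omega,\lambda)$ is homogeneous and the images have pairwise-disjoint "new" domains, so in fact for a tail of $\alpha\in A_j$ (or at least cofinally many) the image is in $G$. Taking the intersection over the finitely many tuples $\bar s$ and the finitely many $l$'s of these cofinal-in-$A_j$ sets of good $\alpha$, and intersecting with $\{\alpha\in A_j:\alpha>B_j\setminus\{\alpha^j_\omega\}\}$, yields arbitrarily large $\alpha\in A_j\setminus\{\alpha^j_\omega\}$ as required.

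The main obstacle I anticipate is the second step: verifying that, inside $V[G]$, there are \emph{cofinally many} (not just one) $\alpha\in A_j$ for which the transported deciding condition $h_{W(u),W(u')}(q)$ actually belongs to $G$. This is where \ref{CL3} and \ref{CL4} must be used to see that the transported conditions, as $\alpha$ varies, agree on the part of their domain that meets the root $W(u\cap u')$ — equivalently $W(\bar s\cap\bar s')$ where $\bar s'=u'$ — while differing only on fresh coordinates; combined with the fact that $G$ restricted to each "fresh" block is itself generic and hence hits the (non-trivial, since below $q\restriction\text{root}\in G$) relevant dense sets, one gets the required cofinal set. One has to be a little careful that the "root" here is exactly $W(\bar s\cap\bar s_{\alpha^j_\omega\to\alpha})=W(\{s_i:i<r+l,\ s_i\neq\alpha^j_\omega\})$ by \ref{CL3}, and that the coherence clause \ref{CL4} guarantees the transport maps for different $\alpha$ restrict compatibly to this root, so that "$q$ restricted to the root lies in $G$" is a single statement independent of $\alpha$. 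Once this is in place, the color-preservation $d_l(\bar s_{\alpha^j_\omega\to\alpha})=d_l(\bar s)$ for all relevant $l$-canonical $\bar s$ follows immediately from \ref{CL2}, completing the claim.
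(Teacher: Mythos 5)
Your overall strategy — transport deciding conditions along the order isomorphisms furnished by \ref{CL2} and argue by genericity that a transported condition lands in $G$ — is the right idea, and for a \emph{single} tuple $\bar s$ your argument is essentially correct: the set of $p\leq q$ forcing some $\alpha>\gamma$ to satisfy $h_{W(u),W(u')}(q)\in G$ is dense below $q\in G$, so cofinally many $\alpha$ work for that one $\bar s$. But the step where you ``take the intersection over the finitely many tuples $\bar s$ \ldots of these cofinal-in-$A_j$ sets of good $\alpha$'' is not valid: $A_j\setminus\{\alpha^j_\omega\}$ has order type $\omega$, and a finite intersection of cofinal subsets of an $\omega$-sequence can be empty (evens and odds). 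You cannot get a common $\alpha$ by handling each $\bar s$ separately and then intersecting.

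What is actually needed, and what the paper does, is a \emph{simultaneous} density argument: starting from a single condition $p$ which already decides $\dot d_l(\bar s)$ via $p\restriction W(\bar s)$ for every relevant $\bar s$, one must show that for one common $\alpha$ the condition $p'=p\cup\bigcup_{\bar s}h_{W(\bar s),W(\bar s')}(p\restriction W(\bar s))$ is a legitimate condition. The nontrivial content is the pairwise compatibility of the transports $h_{W(\bar s),W(\bar s')}(p\restriction W(\bar s))$ and $h_{W(\bar t),W(\bar t')}(p\restriction W(\bar t))$ for $\bar s\neq\bar t$ at the \emph{same} $\alpha$; this is where \ref{CL3} (to identify the overlap as $W(\bar s'\cap\bar t')=W((\bar s\cap\bar t)_{\alpha^j_\omega\to\alpha})$) and \ref{CL4} (to show both transports restrict to the same map $h_{W(\bar s\cap\bar t),W(\bar s'\cap\bar t')}$ there) are used. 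Your appeal to \ref{CL4} is only to the coherence of the transport maps ``as $\alpha$ varies'' for one fixed $\bar s$, which controls the root-dependence in $\alpha$ but does not address the cross-$\bar s$ compatibility at a fixed $\alpha$. Once that compatibility is established, one gets a single dense set below $p$ whose members force the conclusion for some $\alpha>\gamma$ uniformly for all relevant tuples, and the claim follows by genericity. As written, the cross-$\bar s$ compatibility is never established and the intersection move that replaces it is incorrect, so the proof has a genuine gap.
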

\begin{proof}
Fix $j<r$. Work in $V$. 
For any given $p\in \mathbb{P}$ and $\gamma\in A_j\backslash\{\alpha^j_\omega\}$, we want to find $p'\leq p$ and $\alpha>\max\{\gamma, \max B_j\backslash \{\alpha^j_\omega\}\}$ in $A_j\backslash \{\alpha^j_\omega\}$ such that $p'$ forces the conclusion above is true for this $\alpha$. This clearly suffices by the density argument.

Given $p\in \mathbb{P}$, extending it if necessary, we may assume that for each $l\leq r$ and each $l$-canonical tuple $\bar{s}$ from $\langle B_i: i<r\rangle$, $p\restriction W(\bar{s})$ decides the value of $\dot{d}_{l} (\bar{s})$. Find $\alpha\in A_j\backslash \{\alpha^j_\omega\}$ large enough such that 
\begin{itemize}
\item $\alpha>\max\{\max B_j\backslash \{\alpha^j_\omega\},\gamma\}$
\item $dom(p)\cap (W(u\cup \{\alpha\})-W(u))=\emptyset$ for all $u\in  [\bigcup_{i<r}B_i]^{\leq 2r-1}$.
\end{itemize}
This is possible since $dom(p)$ is finite and for any fix $u\in  [\bigcup_{i<r}B_i]^{\leq 2r-1}$, $W(u\cup \{\alpha\}) \cap W(u\cup \{\alpha'\}) = W(u)$ for any $\alpha\neq \alpha'> \max u +1$.

Define $p'=p\cup \bigcup_{l\leq r} \{h_{W(\bar{s}), W(\bar{s}')}(p\restriction W(\bar{s})): \bar{s} \text{ is an } l\text{-canonical tuple from }\langle B_i: i<r\rangle, \alpha^j_\omega\in \bar{s}, \text{ and $\bar{s}'=\bar{s}_{\alpha^j_\omega\to \alpha}$} \}$. We claim that $p'$ is the desired condition. To verify this, it suffices to show the following: 

\begin{enumerate}
\item $p'$ is a condition. We do this by showing for $p$ is compatible with $h_{W(\bar{s}), W(\bar{s}')}(p\restriction W(\bar{s}))$ and $h_{W(\bar{s}), W(\bar{s}')}(p\restriction W(\bar{s}))$ is compatible with $h_{W(\bar{t}), W(\bar{t}')}(p\restriction W(\bar{t}))$ for each $\bar{s},\bar{t}$ as above.
	\begin{itemize}
	\item Fix $\bar{s}$. To see $p$ is compatible with $p^*=_{def}h_{W(\bar{s}), W(\bar{s}')}(p\restriction W(\bar{s}))$, notice that $dom(p)\cap dom(p^*)\subset dom(p)\cap W(\bar{s}') \subset W(\bar{s}'-\{\alpha\})$ by the choice of $\alpha$. By \ref{CL4}, $h_{W(\bar{s}), W(\bar{s}')}\restriction W(\bar{s}-\{\alpha^j_\omega\})$ is the identity function on $W(\bar{s}-\{\alpha^j_\omega\})$ since $(\bar{s}, \bar{s}-\{\alpha^j_\omega\}, <)\simeq (\bar{s}', \bar{s}'-\{\alpha\}, <)$ and $\bar{s}-\{\alpha^j_\omega\}=\bar{s}'-\{\alpha\}$. Hence $p^*\restriction W(\bar{s}'-\{\alpha\})=p\restriction W(\bar{s}-\{\alpha^j_\omega\})=p\restriction W(\bar{s}'-\{\alpha\})$. 
	\item Fix $\bar{s}, \bar{t}$ as above. Let $q_0=_{def}h_{W(\bar{s}), W(\bar{s}')}(p\restriction W(\bar{s})), q_1=_{def}h_{W(\bar{t}), W(\bar{t}')}(p\restriction W(\bar{t}))$. Notice that $dom(q_0)\cap dom(q_1)\subset W(\bar{s}')\cap W(\bar{t}')=W(\bar{s}'\cap \bar{t}')=W((\bar{s}\cap \bar{t})_{\alpha^j_\omega\to \alpha})$. Observe that $(\bar{s}, \bar{s}\cap \bar{t}, <)\simeq (\bar{s}', \bar{s}'\cap \bar{t}', <)$ and $(\bar{t}, \bar{s}\cap \bar{t}, <)\simeq (\bar{t}', \bar{s}'\cap \bar{t}', <)$. By \ref{CL4}, we have $h_{W(\bar{s}), W(\bar{s}')} (W(\bar{s}\cap \bar{t}))=W(\bar{s}'\cap \bar{t}')$ and $h_{W(\bar{t}), W(\bar{t}')} (W(\bar{s}\cap \bar{t}))=W(\bar{s}'\cap \bar{t}')$.
	Hence $q_0\restriction W((\bar{s}\cap \bar{t})_{\alpha^j_\omega\to \alpha})=h_{W(\bar{s}), W(\bar{s}')} (p\restriction W(\bar{s}\cap \bar{t}))=h_{W(\bar{s}\cap \bar{t}), W(\bar{s}'\cap \bar{t}')}(p\restriction W(\bar{s}\cap \bar{t}))$ and $q_1\restriction W((\bar{s}\cap \bar{t})_{\alpha^j_\omega\to \alpha})=h_{W(\bar{t}), W(\bar{t}')} (p\restriction W(\bar{s}\cap \bar{t}))=h_{W(\bar{s}\cap \bar{t}), W(\bar{s}'\cap \bar{t}')}(p\restriction W(\bar{s}\cap \bar{t}))$. Since $q_0$ and $q_1$ agree on their common domain, it follows that they are compatible.
	\end{itemize}

\item $p'$ forces $\dot{d}_l(\bar{s})=\dot{d}_l(\bar{s}')$ for any $l$-canonical tuple $\bar{s}$ from $\langle B_i: i<r\rangle $ containing $\alpha^j_\omega$ where $\bar{s}'=\bar{s}_{\alpha^j_\omega \to \alpha}$ for any $l\leq r$. Fix $l$ and $\bar{s}$. By the initial assumption about $p$, we know there exists $n<r$ such that $p\restriction W(\bar{s})\Vdash \dot{d}_l(\bar{s})=n$. By \ref{CL2}, $h_{W(\bar{s}), W(\bar{s}')}(p\restriction W(\bar{s}))\Vdash \dot{d}_l(\bar{s}')=n$. Hence $p'\Vdash \dot{d}_l(\bar{s}')=n=\dot{d}_l(\bar{s})$.

\end{enumerate}

\end{proof}

\begin{claim}\label{shrinking}
There exist $C^i\subset A_i$ containing $\alpha^i_\omega$ for $i<r$ such that
\begin{enumerate}
\item for each $i<r$, $type(C^i)=\omega+1$
\item for each $l\leq r$ and each index-strictly-increasing $l$-canonical tuple $$\bar{s}=(\alpha^0_{i_0}, \alpha^0_{i_0'}, \cdots, \alpha^{l-1}_{i_{l-1}}, \alpha^{l-1}_{i_{l-1}'}, \alpha^l_{i_l}, \alpha^{l+1}_{i_{l+1}}, \cdots, \alpha^{r-1}_{i_{r-1}})$$ from $\langle C^i: i<r\rangle$, 
$d_l(\bar{s})=d_l(\bar{s}')$, where 
$$\bar{s}'= \bar{s}_{\langle \alpha^j_{i_j'}: j<l\rangle^\frown \langle \alpha^j_{i_j}: r>j\geq l\rangle \to \langle \alpha^j_{\omega}: j<r\rangle}=(\alpha^0_{i_0}, \alpha^0_{\omega}, \cdots, \alpha^{l-1}_{i_{l-1}}, \alpha^{l-1}_{\omega}, \alpha^l_{\omega}, \alpha^{l+1}_{\omega},\cdots, \alpha^{r-1}_{\omega}).$$In particular, the color $\bar{s}$ gets under $d_l$ only depends on its index.

\end{enumerate}
\end{claim}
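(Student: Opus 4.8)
The plan is to construct the sets $C^i$ one element at a time, cycling through the $r$ blocks and applying Claim \ref{ModularOperation} at every step, and then to normalise an arbitrary index-strictly-increasing $l$-canonical tuple to its canonical form by a finite sequence of ``push-up'' moves, each move being justified by the instance of Claim \ref{ModularOperation} that was used when the element it moves first entered the construction. Concretely, in $V[G]$ I keep finite approximations $B_i\ni\alpha^i_\omega$, starting from $B_i=\{\alpha^i_\omega\}$, and at step $kr+j$ (with $k<\omega$, $j<r$; so in round $k$ we add to blocks $0,1,\dots,r-1$ in this order) I apply Claim \ref{ModularOperation} to the current $\langle B_i:i<r\rangle$ and the index $j$, obtaining $\gamma^j_k\in A_j\setminus\{\alpha^j_\omega\}$ lying above every element of $B_j\setminus\{\alpha^j_\omega\}$, and I add $\gamma^j_k$ to $B_j$. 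Countably many steps suffice, and $C^i:=\{\gamma^i_k:k<\omega\}\cup\{\alpha^i_\omega\}\subseteq A_i$ has order type $\omega+1$ and contains $\alpha^i_\omega$.

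The point of the step introducing $\gamma^j_q$ (at step $qr+j$) is the following, with $B^-_i$ denoting the approximations just before that step: for every $l\le r$ and every $l$-canonical tuple $\bar{t}$ in which $\gamma^j_q$ is the top coordinate of block $j$ — i.e. the larger of the two block-$j$ coordinates if $j<l$, or the single block-$j$ coordinate if $j\ge l$ — one has $d_l(\bar{t})=d_l(\bar{t}_{\gamma^j_q\to\alpha^j_\omega})$, provided $\bar{t}_{\gamma^j_q\to\alpha^j_\omega}$ lies in $\Pi_{i<l}[B^-_i]^2\times\Pi_{l\le i<r}B^-_i$. (The $l$-canonicity of $\bar{t}_{\gamma^j_q\to\alpha^j_\omega}$ that Claim \ref{ModularOperation} needs is automatic, since raising a single primed-or-single index to $\omega$ does not disturb the inequalities defining $l$-canonicity; and its block-$j$ coordinates are in $B^-_j$ because, when $j<l$, the surviving block-$j$ coordinate is the base element, whose index is strictly below $q$.)

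Given this, I fix $l$ and an index-strictly-increasing $l$-canonical tuple $\bar{s}$ with index $\bar{i}=\langle i_0,\dots,i_{r-1}\rangle$ and primed indices $i'_0,\dots,i'_{l-1}$, and pass from $\bar{s}$ to $\bar{s}'$ by replacing each top coordinate ($\alpha^j_{i'_j}$ for $j<l$, $\alpha^j_{i_j}$ for $l\le j<r$) by $\alpha^j_\omega$, doing this in decreasing order of the index of the coordinate being replaced, ties broken toward the larger block, and skipping a coordinate whose index is already $\omega$. By the previous paragraph each move preserves the $d_l$-colour once one checks that, at the step the replaced $\gamma^j_q$ was added, every other coordinate of the running tuple already lay in the relevant $B^-_{j'}$; this is where the hypotheses are used. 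If $\gamma^j_q$ is a primed coordinate then $q>\max\{i_m:m<r,\ i_m<\omega\}$, so every (never-moved) base coordinate sits well inside $B^-$; a coordinate in a block below $j$ has index $\le q$, hence lies in $B^-$, which by our round-robin order already contains the first $q+1$ non-$\alpha^{j'}_\omega$ members of that block; a coordinate in a block above $j$ — where $B^-$ contains only the first $q$ such members — has index $\omega$, or has already been sent to $\omega$ by an earlier move, or has index $<q$, these three possibilities being separated exactly by index-strictly-increasingness together with the order of the moves; and two primed coordinates of equal finite index are disentangled by the ``larger block first'' rule. Composing the moves yields $d_l(\bar{s})=d_l(\bar{s}')$, and since $\bar{s}'$ is a function of $(l,\bar{i})$ alone, the final sentence of the claim follows.

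The one genuinely delicate point is exactly this bookkeeping: that with the chosen order of moves and the cyclic construction, the part of the running tuple not being touched always remains inside the approximation that was available when the touched coordinate first entered. It reduces to the handful of cases above, none hard on its own but each using the index-strictly-increasing hypothesis; recognising each move as an instance of Claim \ref{ModularOperation}, and checking $type(C^i)=\omega+1$, are routine.
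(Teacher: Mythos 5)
Your overall strategy—round-robin construction via Claim~\ref{ModularOperation}, followed by a chain of replacements each justified by the instance of Claim~\ref{ModularOperation} that was in force when the replaced element was introduced—is exactly the paper's approach. However, there is a genuine gap in the construction. You pick each new $\gamma^j_k$ only above $B_j\setminus\{\alpha^j_\omega\}$, i.e.\ you enforce monotonicity of $A_j$-indices \emph{within} block $j$, but the paper additionally requires the $A_j$-index of the new element to exceed every $A_{j'}$-index used so far in \emph{every} block (the clause $j^i_k>j^p_q$ for all $(q,p)<_{\mathrm{lex}}(k,i)$). This cross-block monotonicity is precisely the ``index management'' that translates the $l$-canonicity and index-strictly-increasing hypotheses—which are phrased in terms of the original $A_p$-indices—into the $B^-$-membership conditions your replacement argument actually needs.

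Without it, your bookkeeping does not go through. For instance, the assertion ``if $\gamma^j_q$ is a primed coordinate then $q>\max\{i_m:m<r,\ i_m<\omega\}$'' compares the step count $q$ with the $A_m$-indices $i_m$; in your construction these live on uncorrelated scales (and even with the paper's construction $i_m$ can greatly exceed $q$, so the inequality as stated is not the right one—what one actually needs, and what cross-block monotonicity delivers, is that $i_m<i'_j$ forces $(q_m,m)<_{\mathrm{lex}}(q,j)$, hence $\gamma^m_{q_m}\in B^-$). The phrase ``ties broken toward the larger block'' is a further symptom of the same omission: under the paper's construction no two finite $A$-indices across blocks coincide, so there are no ties to break. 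The repair is small—when invoking Claim~\ref{ModularOperation} to choose $\gamma^j_k$, use the ``arbitrarily large'' freedom to place it above every element of $\bigcup_{i<r}(B_i\setminus\{\alpha^i_\omega\})$—but as written the normalisation argument does not follow from the hypotheses.
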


\begin{proof}
We will build these sets in $\omega$-steps. We will pick one point at a time from sets listed in the following order: $$A_0, A_1,\cdots, A_{r-1}, A_0, A_1,\cdots, A_{r-1}, A_0, A_1,\cdots, A_{r-1},\cdots.$$

In particular, we will find $J^i=\{j^i_k: k\in \omega\}\subset \omega$ such that $C^i=\{\alpha^i_{j^i_k}: k\in \omega\}\cup \{\alpha^i_\omega\}$ for each $i<r$. For fixed $i,k$, let $C^i_k$ denote $\{\alpha^i_{j^i_{k'}}: k'\leq k\} \cup \{\alpha^i_\omega\}$. 

Let $C_{-1}^i=\{\alpha^i_\omega\}$ for all $i<r$. Recursively, suppose for some $i<r$ and $k\in \omega$ we have defined $j^p_q$ for all $p<r, q \in \omega\cup \{-1\}$ and $\langle q,p\rangle <_{lex} \langle k,i\rangle$ (i.e. either $q<k$ or $q=k$ and $p<i$). Apply Claim \ref{ModularOperation} to pick $j^i_k\in \omega$ such that 
\begin{itemize}
\item $j^i_k > j^{p}_q$ for all $\langle q,p\rangle <_{lex} \langle k,i\rangle$
\item for any $l\leq r$ and any $l$-canonical tuple $\bar{s}$ containing $\alpha^i_\omega$ from $\langle C^p_{k_p}: p<r\rangle$ where $k_p=k$ if $p<i$ and $k_p=k-1$ if $p\geq i$, it is true that $d_l(\bar{s})=d_l(\bar{s}_{\alpha^i_\omega\to \alpha^i_{j^i_k}})$.
\end{itemize}

We now verify that $\langle C^i : i<r\rangle$ satisfies (2). Fix $l\leq r$ and some index-strictly-increasing $l$-canonical tuple $\bar{s}$ from $\langle C^i : i<r\rangle$, say $$\bar{s}=(\alpha^0_{i_0}, \alpha^0_{i_0'}, \cdots, \alpha^{l-1}_{i_{l-1}}, \alpha^{l-1}_{i_{l-1}'}, \alpha^l_{i_l}, \alpha^{l+1}_{i_{l+1}}, \cdots, \alpha^{r-1}_{i_{r-1}}).$$ By the hypothesis, we know $\max
\{i_m: m<r, i_m<\omega\}<i_k'$ for any $k<l$. By the conclusion of Claim \ref{ModularOperation} and the index management in our recursive process, we know that $$d_l(\bar{s})= d_l(\bar{s}_{\langle \alpha^j_{i_j'}: j<l\rangle^\frown \langle \alpha^j_{i_j}: r>j\geq l\rangle \to \langle \alpha^j_{\omega}: j<r\rangle}) $$$$ =d_l(\alpha^0_{i_0}, \alpha^0_{\omega}, \cdots, \alpha^{l-1}_{i_{l-1}}, \alpha^{l-1}_{\omega}, \alpha^l_{\omega}, \alpha^{l+1}_{\omega}, \cdots, \alpha^{r-1}_{\omega}).$$

\end{proof}

By Claim \ref{shrinking}, we may without loss of generality assume that the sets $\langle A_i: i<r\rangle$ already satisfy that: 
for each $l\leq r$, for each index-strictly-increasing $l$-canonical tuple $\bar{s}$ from $\langle A_i: i<r\rangle$ satisfies (2) in the conclusion of Claim \ref{shrinking}.

To finish the proof, we basically need similar arguments as in Claim 2.9 and Step 5 from \cite{6authors}. We supply a proof for completeness.
%
%

\begin{claim}\label{LastStepRamsey}
There exist $\langle B_i\subset A_i: i<r, \alpha^i_\omega\in B_i, type(B_i)=\omega+1\rangle$ and $\langle \rho_l<r: l\leq r\rangle$ such that for each $l\leq r$, for each index-strictly-increasing $l$-canonical tuple $\bar{s}$ from $\langle B_i: i<r\rangle$, $d_l(\bar{s})=\rho_l$.
\end{claim}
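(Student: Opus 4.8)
The plan is to prove Claim~\ref{LastStepRamsey} by a finite iteration of the classical Dushnik--Miller result from Claim~\ref{DM}, peeling off the canonicity levels $l=0,1,\dots,r$ one at a time while shrinking the sets $\langle A_i : i<r\rangle$ to ever smaller copies of $\omega+1$. Since by the reduction immediately preceding the claim, the color $d_l(\bar s)$ of an index-strictly-increasing $l$-canonical tuple depends only on its index $\bar i = \langle i_k : k<r\rangle$, we may regard $d_l$ as inducing a coloring of (a suitable set of) strictly increasing tuples of indices drawn from $\omega$, and it is these \emph{index} colorings that we will homogenize.

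First I would make precise how an index-strictly-increasing $l$-canonical tuple $\bar s$ from $\langle A_i : i<r\rangle$ is encoded by its index: the index is a strictly increasing sequence of natural numbers (the coordinates that are $<\omega$), of length equal to the number of ``finite'' slots, namely $l + (r-l) = r$ slots, but $l$ of them contribute a \emph{pair} $i_k < i_k'$ and the constraint is that every finite index is below every $i_k'$ — so effectively the data is a strictly increasing tuple of natural numbers of bounded length (at most $2r$), together with bookkeeping about which slot each entry fills. The key point is that there is a single countable index set $\Omega \subseteq \omega$ (namely the set of $k$ with $\alpha^i_k \in C^i$ for the current $C^i$'s, which I will keep of order type $\omega$) such that homogenizing $d_l$ amounts to 2-coloring, hence $r$-coloring, the finite increasing tuples from $\Omega$ of a fixed length $m \le 2r$. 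Then I would invoke Claim~\ref{DM} (with $n=m$, $k=r$) to extract from $\Omega$ a subset of order type $\omega+1$ on which the relevant index coloring is constant with some value $\rho_l$; translating back, this gives sets $C^i \subseteq A_i$ of order type $\omega+1$, still containing $\alpha^i_\omega$ (the top point $\delta$ of the order-type-$(\omega+1)$ set plays the role of the ``$\omega$'' coordinate), on which every index-strictly-increasing $l$-canonical tuple receives color $\rho_l$.

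The iteration runs as follows: start with $A_i^{(0)} = A_i$; having produced $A_i^{(l)}$ on which levels $0,1,\dots,l-1$ are already homogeneous, apply the paragraph above to the level-$l$ index coloring to obtain $A_i^{(l+1)} \subseteq A_i^{(l)}$ of order type $\omega+1$ with $\alpha^i_\omega \in A_i^{(l+1)}$ and a color $\rho_l$; since $A_i^{(l+1)} \subseteq A_i^{(l)}$, any index-strictly-increasing $l'$-canonical tuple from $\langle A_i^{(l+1)}\rangle$ for $l' < l$ is in particular such a tuple from $\langle A_i^{(l)}\rangle$, so its color is still $\rho_{l'}$ — the earlier homogeneity is preserved under further shrinking. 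After $r+1$ steps set $B_i = A_i^{(r+1)}$; these work with the colors $\langle \rho_l : l \le r\rangle$.

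The main obstacle is the combinatorial bookkeeping in the reduction of ``homogenize $d_l$ on canonical tuples'' to ``homogenize a coloring of plain increasing finite tuples,'' which Claim~\ref{DM} is stated for: one must check that the ``index-strictly-increasing'' constraint — i.e. that all \emph{finite} indices lie below each of the ``paired'' indices $i_k'$ — is automatically respected when we restrict to a subset of order type $\omega+1$ and treat its maximum as the common point $\delta$, and that the partition into which slot each index occupies can be absorbed into a finite coloring (there are only finitely many order-types of such configurations). Once it is seen that each level-$l$ problem is genuinely a finite-dimensional finite-coloring problem on a set of order type $\omega+1$, Claim~\ref{DM} applies verbatim and the rest is routine shrinking. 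A minor point to handle carefully is ensuring the top point $\alpha^i_\omega$ is retained at every stage: this is automatic because in Claim~\ref{DM} the extracted set $A^* \cup \{\delta\}$ contains the ``limit'' point $\delta$, and we arrange the correspondence so that $\delta$ corresponds to $\alpha^i_\omega$.
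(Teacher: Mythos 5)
The overall iteration strategy---homogenize one level $l$ at a time, shrinking the $A_i$'s to nested subsets of order type $\omega+1$ and noting that earlier homogeneity is preserved under shrinking---is correct and matches the paper's. But the core step is wrong. You propose to homogenize the level-$l$ index coloring by applying Claim~\ref{DM} to the countable index set $\Omega\subseteq\omega$ and extracting a homogeneous subset of order type $\omega+1$. Claim~\ref{DM} is a statement about $\omega_1$; its proof goes through an elementary submodel $N\prec H(\omega_2)$ with $\delta=N\cap\omega_1$, and there is no analogue for a countable domain. Indeed, a subset of $\omega$ cannot even have order type $\omega+1$, and if you instead take $\Omega\subseteq\omega+1$ (including the top index), the relevant partition relation $\omega+1\to(\omega+1)^2_r$ is simply false (color a pair by whether or not its larger element is $\omega$). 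Moreover, even granting some such lemma, nothing would force the extracted ``limit'' point $\delta$ to coincide with the distinguished index $\omega$ corresponding to $\alpha^i_\omega$; Claim~\ref{DM} produces an unconstrained $\delta$, not one you choose.

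The fix is to use Claim~\ref{shrinking} more precisely. After that reduction, the color of an index-strictly-increasing $l$-canonical tuple $\bar s$ depends only on the \emph{first} entries $i_0<\dots<i_{l-1}$ of the pairs---and by the definition of $l$-canonical these are necessarily $<\omega$, since $i_k<i_k'\le\omega$. So the coloring to homogenize is genuinely $g\colon[\omega]^l\to r$, and the classical Ramsey theorem (not Dushnik--Miller) gives an infinite monochromatic $I\subseteq\omega$. You then set $B_i=\{\alpha^i_j:j\in I\}\cup\{\alpha^i_\omega\}$: the top point $\alpha^i_\omega$ is appended back by hand, and this is harmless precisely because, by Claim~\ref{shrinking}, the positions that may be occupied by $\omega$ (the $i_k'$'s and the $i_k$'s with $k\ge l$) do not affect the color. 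That single observation eliminates the need for any $\omega+1\to(\omega+1)^n_r$-type relation and lets the finite iteration go through exactly as you sketched.
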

\begin{proof}
Fix $l\leq r, W\in [\omega]^{\aleph_0}$. Define $g: [W]^{l}\to r$ such that for each $\bar{i}=\{i_0<i_1<\cdots < i_{l-1}\}$,
 $$g(\bar{i})=d_l(\alpha^0_{i_0},\alpha^0_\omega, \cdots, \alpha^{l-1}_{i_{l-1}}, \alpha^{l-1}_\omega, \alpha^l_\omega, \alpha^{l+1}_\omega, \cdots, \alpha^{r-1}_\omega).$$ 
 
Let $I\in [W]^{\aleph_0}$ be a monochromatic subset with color $\rho_l$ for $g$. For any index-strictly-increasing $l$-canonical tuple $$\bar{s}=(\alpha^0_{j_0},\alpha^0_{j_0'}, \cdots, \alpha^{l-1}_{j_{l-1}}, \alpha^{l-1}_{j'_{l-1}}, \alpha^l_{j_l}, \alpha^{l+1}_{j_{l+1}}, \cdots, \alpha^{r-1}_{j_{r-1}})$$ such that $j_k, j'_t\in I\cup \{\omega\}$ for any $k<r$ and $t<l$,
 by Claim \ref{shrinking} and the remark that follows, we know that $$d_l(\bar{s})=d_l(\alpha^0_{j_0},\alpha^0_{\omega}, \cdots, \alpha^{l-1}_{j_{l-1}}, \alpha^{l-1}_{\omega}, \alpha^l_{\omega}, \alpha^{l+1}_{\omega}, \cdots, \alpha^{r-1}_{\omega})=g(\{j_0<\cdots < j_{l-1}\})=\rho_l.$$
 
To get the conclusion of the claim, apply the procedure above repeatedly to get $\omega \supset I_0\supset I_1\supset \cdots \supset I_{r-1}\supset I_r$.
It is clear that $B_i=\{\alpha^i_j: j\in I_{r}\}\cup \{\alpha^i_\omega\}$ for $i<r$ will be the desired sets.

\end{proof}

%
%
%
%

By Claim \ref{LastStepRamsey}, we may without loss of generality assume that the sets $\langle A_i: i<r\rangle$ already satisfy that: there exist $\langle \rho_l: l\leq r\rangle$ such that for each $l\leq r$, for each index-strictly-increasing $l$-canonical tuple $\bar{s}$ from $\langle A_i: i<r\rangle$, $d_l(\bar{s})=\rho_l$. By the Pigeon hole principle, there exist $l'<l$ such that $\rho_{l'}=\rho_l=\rho$. 

\begin{claim}\label{finish(1)}
There exists an infinite $X$ such that $f\restriction X+X \equiv \rho$.
\end{claim}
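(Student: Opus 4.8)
The plan is to build $X=\{x_n:n\in\omega\}\subseteq\bigoplus_{i<\lambda}\mathbb{N}$ by hand and then appeal to the homogeneity already arranged for $\langle A_i:i<r\rangle$: for every $j\leq r$ and every index-strictly-increasing $j$-canonical tuple $\bar u$ from $\langle A_i:i<r\rangle$ one has $d_j(\bar u)=\rho_j$. Since $X+X$ is the union of the diagonal sums $2x_n$ with the off-diagonal sums $x_m+x_n$ ($m<n$), it is enough to arrange that every $2x_n$ equals $s_{l'}*\bar s^{(n)}$ for an index-strictly-increasing $l'$-canonical tuple $\bar s^{(n)}$, and every $x_m+x_n$ equals $s_{l}*\bar t^{(m,n)}$ for an index-strictly-increasing $l$-canonical tuple $\bar t^{(m,n)}$; then $f(2x_n)=d_{l'}(\bar s^{(n)})=\rho_{l'}=\rho$ and $f(x_m+x_n)=d_l(\bar t^{(m,n)})=\rho_l=\rho$.

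I would split the blocks into \emph{head} blocks $A_0,\dots,A_{l'-1}$, \emph{middle} blocks $A_{l'},\dots,A_{l-1}$ (a nonempty range, since $l'<l$), and \emph{tail} blocks $A_l,\dots,A_{r-1}$. Fix $a_0<\dots<a_{l'-1}$ in $\omega$, and choose recursively in $n$ naturals $c^{(n)}_{l'}<\dots<c^{(n)}_{l-1}$, each above $a_{l'-1}$ and above every $c^{(m)}_i$ with $m<n$. Let $x_n$ take value $1$ on $\alpha^i_{a_i}$ and $\alpha^i_\omega$ for each head block $A_i$, value $2$ on $\alpha^i_{c^{(n)}_i}$ for each middle block $A_i$, and value $2$ on $\alpha^i_\omega$ for each tail block $A_i$, and $0$ elsewhere; then $x_n\in\bigoplus_{i<\lambda}\mathbb{N}$, and the $x_n$ are pairwise distinct because they differ on the nonempty family of middle blocks, so $X$ is infinite.

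Doubling $x_n$ sends the value-$1$ head coordinates to value $2$ and the value-$2$ middle and tail coordinates to value $4$, so $2x_n=s_{l'}*\bar s^{(n)}$ where $\bar s^{(n)}$ consists, listed in increasing order, of the pairs $\{\alpha^i_{a_i},\alpha^i_\omega\}$ for $i<l'$, the singletons $\alpha^i_{c^{(n)}_i}$ for $l'\leq i<l$, and the singletons $\alpha^i_\omega$ for $l\leq i<r$. This tuple is $l'$-canonical since the larger member of each head pair is some $\alpha^i_\omega$, which exceeds every finite index that occurs; and it is index-strictly-increasing since its index reads $a_0<\dots<a_{l'-1}<c^{(n)}_{l'}<\dots<c^{(n)}_{l-1}<\omega=\dots=\omega$. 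For $m<n$ the sum $x_m+x_n$ agrees with $x_n$ on the head blocks (two equal value-$1$ coordinates add to $2$), carries the two distinct value-$2$ coordinates $\alpha^i_{c^{(m)}_i},\alpha^i_{c^{(n)}_i}$ on each middle block, and is $4$ on each tail coordinate; hence $x_m+x_n=s_{l}*\bar t^{(m,n)}$ where $\bar t^{(m,n)}$ consists of the head pairs, the middle pairs $\{\alpha^i_{c^{(m)}_i},\alpha^i_{c^{(n)}_i}\}$, and the tail singletons. This tuple is $l$-canonical since the larger member of each head pair is an $\alpha^i_\omega$ and the larger member $\alpha^i_{c^{(n)}_i}$ of each middle pair was chosen above $a_{l'-1}$ and above every $c^{(m)}_j$, so all larger members dominate the finite indices occurring; and it is index-strictly-increasing since its index reads $a_0<\dots<a_{l'-1}<c^{(m)}_{l'}<\dots<c^{(m)}_{l-1}<\omega=\dots=\omega$. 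By the homogeneity hypothesis, $f\restriction X+X\equiv\rho$, as required.

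The only real work is the bookkeeping in the previous paragraph: one must verify simultaneously that every tuple produced is increasing, is canonical in the precise sense (the designated ``large'' coordinates strictly dominate all finite indices occurring), and has a strictly increasing index. The two design decisions that make all three hold at once are to use $\alpha^i_\omega$ both as the large member of every head pair and as the content of every tail coordinate, and to place the middle indices $c^{(n)}_i$ of successive $x_n$ in pairwise disjoint increasing blocks lying above $a_{l'-1}$. I expect this verification, rather than any conceptual point, to be the main obstacle, exactly as in the case $r=2$ handled earlier.
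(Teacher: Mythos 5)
Your proposal is correct and follows essentially the same route as the paper's proof of the claim. Both arguments split the blocks into a head range $k<l'$, a middle range $l'\le k<l$, and a tail range $k\ge l$, define $x_n=\frac12 s_{l'}*\bar a_n$ where the head pairs use $\alpha^k_\omega$ as the larger coordinate, the middle singletons run through pairwise disjoint increasing blocks, and the tail singletons are all $\alpha^k_\omega$; the only cosmetic difference is that the paper pins the head indices to $k$ and the middle indices to the explicit formula $k+i(l-l')$, whereas you choose them abstractly by recursion.
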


\begin{proof}
For $i<\omega$, let 
\begin{equation*}
\begin{split}
\bar{a}_i=(\alpha^0_0, \alpha^0_\omega,\alpha^1_1, \alpha^1_\omega, \cdots, \alpha^{l'-1}_{l'-1}, \alpha^{l'-1}_\omega,\\ \alpha^{l'}_{l'+i\cdot (l-l')},\alpha^{l'+1}_{l'+1+i\cdot (l-l')},\cdots,  \alpha^{l-1}_{l-1+i\cdot (l-l')}, \\ \alpha^{l}_{\omega}, \cdots, \alpha^{r-1}_{\omega}),
\end{split}
\end{equation*}

namely, we take 

\begin{enumerate}

\item $\{\alpha^k_{k}, \alpha^k_\omega\}$ from $A_k$ for each $k<l'$
\item $\{\alpha^k_{k+i(l-l')}\}$ from $A_k$ for each $k\geq l'$ and $k<l$
\item $\{\alpha^k_{\omega}\}$ from $A_k$ for each $k\geq l$.
\end{enumerate}

Define $x_i=\dfrac{1}{2} s_{l'}* \bar{a}_i$. For $i < j \in \omega$, consider
\begin{equation*}
\begin{split}
\bar{b}_{i,j}=(\alpha^0_0, \alpha^0_\omega,\cdots, \alpha^{l'-1}_{l'-1}, \alpha^{l'-1}_\omega, \\ \alpha^{l'}_{l'+i\cdot (l-l')}, \alpha^{l'}_{l'+j\cdot (l-l')}, \cdots \alpha^{l-1}_{l-1+i\cdot (l-l')}, \alpha^{l-1}_{l-1+j\cdot (l-l')}, \\ \alpha^{l}_{\omega}, \cdots, \alpha^{r-1}_{\omega}),
\end{split}
\end{equation*}

namely, we take 

\begin{enumerate}

\item $\{\alpha^k_{k}, \alpha^k_\omega\}$ from $A_k$ for each $k<l'$
\item $\{\alpha^k_{k+i(l-l')}, \alpha^k_{k+j(l-l')}\}$ from $A_k$ for each $k\geq l'$ and $k<l$
\item $\{\alpha^k_{\omega}\}$ from $A_k$ for each $k\geq l$.
\end{enumerate}
It is not hard to notice that $x_i+x_j = s_l* \bar{b}_{i,j}$. To check that $\bar{b}_{i,j}$ is an $l$-canonical tuple, let $k<l$ be given. If $k<l'$, then it is immediate that $k<\omega$. If $l'\leq k<l$, then $k+i(l-l') < k+(i+1)(l-l')\leq k+j(l-l')$ since $l-l'>0$. Also notice that $l'+j(l-l') \geq l'+(i+1)(l-l') =l + i (l-l') > l-1 + i(l-l')$. 

Therefore, for any $i<j\in \omega$, $\bar{a}_i$ ($\bar{b}_{i,j}$ respectively) is easily seen to be an index-strictly-increasing $l'$-canonical ($l$-canonical) tuple. Therefore, $f(2x_i)=f(s_{l'}*\bar{a}_i)=d_{l'}(\bar{a}_i)=\rho_{l'}=\rho$ and $f(x_i+x_j)=f(s_l*\bar{b}_{i,j})=d_l(\bar{b}_{i,j})=\rho_l =\rho$. We conclude that $X=\{x_i: i\in \omega\}$ is the set as desired.

\end{proof}
Claim \ref{finish(1)} finishes the proof of (\ref{part1}).

%
%
%
%
%
\section{Acknowledgement}
I thank Dániel Soukup for telling me about this problem and sharing his insights. I thank the organizers and the participants of the SETTOP 2018 conference in Novi Sad during which part of this work was done. I thank the Department of Mathematical Sciences of Carnegie Mellon University for the financial support for me to attend SETTOP 2018. 

I thank James Cummings, Sittinon Jirattikansakul and Stevo Todor\v{c}evi\'{c} for helpful discussions and comments.

Finally, I am grateful to the anonymous referee who provides extensive comments and corrections which greatly improve the exposition.

\bibliographystyle{plain}
\bibliography{bib}

\Addresses

\end{document}